\newtheorem{thm}{Theorem}
\newtheorem{cor}[thm]{Corollary}
\newtheorem{lem}[thm]{Lemma}
\theoremstyle{mydefinition}
\theoremstyle{myremark}
\def\pa[1]{\frac{\partial}{\partial x}}
\newcommand{\qfac}[1]{(q)_n}
\title{ a sufficient condition for $(\alpha, \beta)$ Somos $4$ Hankel Determinants}
\author{Ying Wang$^1$ and Zihao Zhang$^{2,*}$}
\address{ $^{1}$School of Mathematics and Statistics, North China University of Water Resources and Electric Power,
 Zhengzhou 450045, PR China
 }
 \address{$^{2}$School of Mathematical Sciences, Capital Normal University,
 Beijing 100048, PR China
 }
\email{$^1$\texttt{wangying2019@ncwu.edu.cn}\ \ \ \& $^2$\texttt{zihao-zhang@foxmail.com}}
\date{\today}
\begin{document}

\maketitle

\section{abstract}
By using Sulanke-Xin
continued fractions method, Xin proposed a recursion system to solve the Somos 4 Hankel determinant conjecture. We find
Xin's recursion system indeed give a sufficient condition for $(\alpha, \beta)$ Somos $4$ sequences. This allows us to prove
4 conjectures of Barry on $(\alpha, \beta)$ Somos $4$ sequences in a unified way.

\section{introduction}

An $(\alpha, \beta)$ Somos 4 sequence \cite{Barrysomos} $s_n$ is a sequence such that
$$s_n =\frac{\alpha s_{n-1}s_{n-3}+ \beta s_{n-2}^2}{s_{n-4}} ,\  n \geq 4,$$
for appropriate initial values. Especially, it reduces to the ordinary Somos 4 sequence when $\alpha=1, \beta=1$.
Somos \cite{Somos} considered the fundamental equation $x=y-y^2=z-z^3$. He observed that expanding $y$ as a series in $z$ gives $y=z+z^2+z^3+3z^4+8z^5+23z^6 +\cdots.$
Let $Q(z)=(y-z)/z^2$, he guessed that the Hankel determinants of $Q$ form the Somos 4 sequence. This is also called the Somos 4 conjecture.
Later, the conjecture had been solved by Xin \cite{Xin}.
As part of the tools that can be used to derive Somos sequences, the (sequence) Hankel transforms have been employed \cite{Chang1, Chang2, Shadow, Hone1, Hankel, Xin}.

Let $A=(a_0,a_1,a_2,\dots)$ be a sequence,
and denote by $A(x)=\sum_{n\geq0}a_nx^n$ its generating function.
Define the Hankel transform (or determinants) of $A(x)$ by
$$\mathcal{H}_n(A)= \mathcal{H}_n(A(x)) =(a_{i+j})_{0\leq i,j\leq n-1}, \quad \text{ and }\quad  H_n(A) =H_n(A(x))= \det( \mathcal{H}_n(A)).$$
If it is clear from the context, we will omit the $A$ or $A(x)$.
Traditionally, $\mathcal{H}_0$ is defined to be the empty matrix and $H_0=1$.

 There are many classical tools of continued fractions for evaluating
Hankel determinants, such as the $J$-fractions (Jacobi continued fractions) in Krattenthaler
\cite{kratt} or Wall \cite{wall} and the $S$-fractions in Jones and
Thron \cite[Theorem~7.2]{cfraction}. Our inspiration is the Xin's recursion system derived from the Sulanke and Xin's quadratic transformation,
which was developed from the continued fraction method of Gessel and Xin \cite{xingessel2}.

Our main contribution is the following Theorem 1, which encompasses the four conjecture about $(\alpha, \beta)$ Somos 4 Hankel determinants in \cite{Barrysomos}.
\begin{thm}
 If
 $$Q_0(x)=\frac{a_0+b_0x}{1+cx+d_0x^2+x^2(-1+f_0x)Q_0(x)},$$
then $\{H_n(Q_0)\}_{n \geq 0}$ is  a $(\alpha, \beta)$ Somos $4$ sequence with parameters
$$\alpha=a_0^2  (c+f_0+f_1)^2,$$ and
$$\beta=- \left( c+f_{{0}}+f_{{1}} \right) ^{2}{a_{{0}}}^{3}-a_{{1}} \left(  \left( f_{{0}}-f_{{1}} \right)  \left( c+f_{{0}}+f_{{1}} \right)-a_{{1}}
 \right){a_{{0}}}^{2}.$$
Here $a_1=a_0-d_0+\frac{b_0}{a_0}(c -\frac{b_0}{a_0})$ and $f_1=-\frac{b_0}{a_0}$. 
\end{thm}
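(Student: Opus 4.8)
The plan is to read the displayed equation as the input to Xin's recursion system \cite{Xin}, which is derived from the Sulanke--Xin quadratic transformation, to extract from that system a bilinear recursion for $\{H_n(Q_0)\}$, and then to identify the two constants.

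The first step is to unfold the recursion. The self-referential form of the hypothesis is exactly the shape
$$Q_k(x)=\frac{a_k+b_kx}{1+cx+d_kx^2+x^2(-1+f_kx)\,Q_{k+1}(x)}$$
on which one step of the transformation acts: to a series presented in this form with data $(a_k,b_k,c,d_k,f_k)$ one attaches the next tail $Q_{k+1}$ and the next data $(a_{k+1},b_{k+1},c,d_{k+1},f_{k+1})$ via Xin's recursion, whose first two components read $f_{k+1}=-b_k/a_k$ and $a_{k+1}=a_k-d_k+\tfrac{b_k}{a_k}\bigl(c-\tfrac{b_k}{a_k}\bigr)$, with companion formulas for $b_{k+1},d_{k+1}$ and with $c$ invariant. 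The displayed equation says precisely that the data of the theorem may be taken as the initial data, with $Q_1=Q_0$; hence the whole orbit is determined by $(a_0,b_0,c,d_0,f_0)$, and the quantities $a_1,f_1$ in the statement are the first iterates (so that $b_0,d_0$ enter the final answer only through $a_1$ and $f_1$).

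The second step is to transfer Hankel determinants across the transformation. The quadratic transformation is engineered so that passing from a series to its next tail multiplies $H_n$ by an explicit monomial in the leading datum $a_k$ and in the $f$-type quantities while shifting the index in a controlled way; schematically there are identities $H_n(Q_k)=\pm\,a_k^{\,e}\,(\text{monomial in }c,f)\,H_{n'}(Q_{k+1})$, and composing two of them ties $H_\bullet(Q_0)$ to $H_\bullet(Q_2)$ through coefficients built from $a_0,f_0,a_1,f_1,c$. I would then invoke a Desnanot--Jacobi (Dodgson condensation) identity relating $H_{n+1}(Q_0)$, $H_n(Q_0)$, $H_{n-1}(Q_0)$ to the Hankel and almost-Hankel minors of the shifted coefficient sequence, and use the transfer identities to re-express those shifted minors through the $H_\bullet(Q_k)$. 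Since $Q_1=Q_0$, each $H_\bullet(Q_1)$ is literally $H_\bullet(Q_0)$ and each $H_\bullet(Q_2)$ is pinned to $H_\bullet(Q_0)$ through the level-$0$-to-$1$ relation, so that eliminating the auxiliaries collapses everything into one four-term relation $H_nH_{n-4}=\alpha H_{n-1}H_{n-3}+\beta H_{n-2}^2$. Tracking the accumulated monomials and signs through the two applications of the step --- the $a$-part contributing $a_0^2$ to $\alpha$ and $a_0^3,\ a_0^2a_1$ to $\beta$, and the linear pieces $-1+f_0x$, $-1+f_1x$ of the two tails assembling into the symmetric combination $c+f_0+f_1$ --- then yields $\alpha=a_0^2(c+f_0+f_1)^2$ and $\beta=-(c+f_0+f_1)^2a_0^3-a_1\bigl((f_0-f_1)(c+f_0+f_1)-a_1\bigr)a_0^2$.

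The step I expect to be the main obstacle is exactly this collapse: showing that the self-referential hypothesis really forces the a priori longer recursion coming from condensation to close after a window of length four --- equivalently, that Xin's recursion system is not merely a necessary but a genuinely sufficient condition for the $(\alpha,\beta)$ Somos $4$ property --- and getting the index shifts and the exponents in the transfer lemma exactly right. A convenient cross-check during the computation is a low-order verification: expand $Q_0$ for generic parameters, form $H_0,\dots,H_7$, and confirm the relation with the stated $\alpha,\beta$. Finally, the divisions by the $a_k$ in the recursion, and the handling of vanishing $H_n$ (for instance the $\alpha=0$ Catalan-type specializations), should be dispatched by a density or limiting argument in the parameter space, so that the bilinear identity, once established generically, holds for all admissible initial values.
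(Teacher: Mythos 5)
There is a genuine gap, and it starts with a misreading of the hypothesis. The displayed equation is a quadratic functional equation satisfied by $Q_0$ alone: beyond fixing the data $(a_0,b_0,c,d_0,f_0)$, its only content is that $e_0=-1$. It does not say that the Sulanke--Xin transformation fixes $Q_0$, so your identification $Q_1=Q_0$ is false in general: $Q_1=\tau(Q_0)$ is a different series with new parameters given by \eqref{eq a}--\eqref{eq f}. (If $Q_1$ really equalled $Q_0$, the parameters would be a fixed point of the recursion, forcing $a_n\equiv a_0$ and $H_n(Q_0)=a_0^{\binom{n+1}{2}}$ --- a degenerate special case, not the theorem.) Since your ``collapse'' argument --- that each $H_\bullet(Q_1)$ is literally $H_\bullet(Q_0)$ and that $H_\bullet(Q_2)$ is thereby pinned back to $H_\bullet(Q_0)$ --- rests on this identification, the very step that is supposed to produce the four-term relation does not go through.

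Beyond that, the decisive ingredient is missing, and you flag it yourself: you name ``the collapse'' as the expected main obstacle but supply no mechanism for it. What actually closes the window at length four in the paper is not Desnanot--Jacobi condensation but two facts. First, iterating Lemma \ref{lem-Xin} gives the product formula \eqref{hnq0a}, $H_n(Q_0)=a_0^na_1^{n-1}\cdots a_{n-1}$, which converts the Somos $4$ relation for $H_n(Q_0)$ into the purely algebraic identity $a_na_{n-1}a_{n-2}=\alpha+\beta/a_{n-1}$. Second, Xin's solved recursion (Theorem \ref{thm333}, equation \eqref{eqan}) expresses $a_{n+2}a_{n+1}+a_{n+1}a_n$ in closed form in $a_0,a_1,c,f_0,f_1$ and $a_{n+1}$; substituting it reduces everything to the inductive verification $T(n)=0$, whose base case $n=1$ is exactly what determines $\beta$, while $\alpha=a_0^2(c+f_0+f_1)^2$ is what makes the remaining terms cancel. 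Your correct observations --- that $c$ and $e_n=-1$ are preserved under the iteration, that $a_1,f_1$ are the first iterates so $b_0,d_0$ enter only through them, and that a low-order expansion is a prudent cross-check --- are compatible with this route, but without something playing the role of \eqref{eqan} (or an equivalent closed-form control of the sequence $a_n$), the proposal has no way to establish the bilinear relation.
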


The paper is organized as follows.
In Section 2, we introduce Xin's recursion system. In Section 3, we prove Theorem 1. In Section 4, we prove the four conjectures of Barry.


%
\section{a system of recurrence}
Proposition $4.1$ of \cite{Sulanke-xin} defines a quadratic transformation $\tau$ on certain generating function $F(x)$ of algebraic degree two,
and establishes a simple connection between $H_n(F)$ and $H_{n}(\tau (F))$, namely,
$H_n(F) = a^n H_{n-d-1}(\tau (F))$, where $a$ is a constant and $d$ is a nonnegative integer. See \cite{Sulanke-xin} for more details.
What we need here is the following result.

Xin applied the quadratic transformation $\tau$ to the following $F(x)$, and obtained a series of recursion system.
\begin{lem}\cite{Xin} \label{lem-Xin} Suppose $a\neq 0$. If the generating functions $F (x)$ and $G(x)=\tau(F(x))$ are uniquely defined by
\begin{align*}
&F(x)=\frac{a+bx}{1+cx+dx^2+x^2(e+fx)F(x)},\\
&G(x)=\frac{-\frac{a^3e+a^2d-acb+b^2}{a^2}-\frac{a^4f+ca^3d-c^2a^2b+2cab^2-ba^2d-b^3}{a^3}x}{1+cx-\frac{-2acb+2b^2+a^2d}{a^2}x^2+x^2(-1-\frac{b}{a}x)G(x)},
\end{align*}
then $H_n(F)=a^nH_{n-1}(G)$.
\end{lem}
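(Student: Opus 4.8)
The plan is to exhibit $G$ as a single contraction (``first step of the $J$-fraction'') of $F$, so that the identity $H_n(F)=a^nH_{n-1}(G)$ reduces to the classical Hankel-determinant formula for continued fractions; in the terminology of \cite{Sulanke-xin} this is the case $d=0$ of Proposition 4.1, and the real content is to verify that the resulting transform of $F$ is exactly the $G$ displayed in the statement.

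First I would clear denominators in the definition of $F$, obtaining the quadratic equation
$$x^2(e+fx)F^2+(1+cx+dx^2)F-(a+bx)=0,$$
whence $F(0)=a\neq0$ and $F(x)=a+(b-ac)x+\cdots$. Since $a\neq0$ I may set $c_1:=(b-ac)/a$ and define a formal power series $G$ by requiring $F=\frac{a}{1-c_1x-x^2G}$; the choice of $c_1$ is exactly what forces $1-c_1x-a/F$ to vanish to order $2$ at $x=0$, so $G$ is well defined. Dividing the quadratic equation by $F$ gives $\frac{a+bx}{F}=x^2(e+fx)F+1+cx+dx^2$; substituting this into $\frac{a}{F}=1-c_1x-x^2G$ and cancelling the common terms $a+acx$ leaves the relation
$$a(e+fx)F=-\mu_1-(a+bx)G,\qquad\mu_1:=\frac{b^2-abc+a^2d}{a},$$
and setting $x=0$ already recovers $G(0)=-\frac{a^3e+a^2d-acb+b^2}{a^2}$, the constant term asserted for $G$.

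Next I would solve the last display for $F$, substitute $F=\frac{-\mu_1-(a+bx)G}{a(e+fx)}$ back into the quadratic equation for $F$, and multiply through by $a^2(e+fx)$; this turns it into $x^2M^2+a(1+cx+dx^2)M-a^2(e+fx)(a+bx)=0$ with $M:=-\mu_1-(a+bx)G$. Expanding $M$ and collecting powers of $G$ produces a quadratic in $G$ each of whose three polynomial coefficients is divisible by $(a+bx)$: this is clear for the $G^2$- and $G^1$-coefficients, and for the $G^0$-coefficient $x^2\mu_1^2-a\mu_1(1+cx+dx^2)-a^2(e+fx)(a+bx)$ it holds because that polynomial vanishes at $x=-a/b$, which in turn is just the relation $a\mu_1=b^2-abc+a^2d$. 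Dividing out $(a+bx)$ leaves
$$x^2(a+bx)G^2-a(1+cx+h_2x^2)G+a(g_0+g_1x)=0,$$
and a term-by-term match identifies $h_2=\frac{2acb-2b^2-a^2d}{a^2}$, $g_0=-\frac{a^3e+a^2d-acb+b^2}{a^2}$ and $g_1=-\frac{a^4f+ca^3d-c^2a^2b+2cab^2-ba^2d-b^3}{a^3}$. This is precisely the equation defining the $G$ in the statement, so the two series coincide. I expect this substitute-and-simplify step to be the main obstacle, though a purely mechanical one: it is a lengthy polynomial manipulation, and one has to be sure that dividing by $(e+fx)$ and by $(a+bx)$ is legitimate, which is exactly what the hypothesis that $F$ and $G$ are ``uniquely defined'' guarantees.

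Finally, the tail relation $F=\frac{a}{1-c_1x-x^2G}$ is a single contraction step of a $J$-fraction, so the standard Hankel-determinant formula for continued fractions (see \cite{kratt} or \cite{wall}; equivalently the $d=0$ case of Proposition 4.1 of \cite{Sulanke-xin}) immediately yields $H_n(F)=a^nH_{n-1}(G)$. Together with the identification of $G$ above, this proves the lemma.
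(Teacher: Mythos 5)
The paper gives no proof of this lemma---it is quoted from \cite{Xin}, where it is obtained by applying the quadratic transformation $\tau$ of Proposition 4.1 of \cite{Sulanke-xin}---and your argument is exactly that argument made self-contained: the contraction $F=\frac{a}{1-c_1x-x^2G}$ with $c_1=\frac{b-ac}{a}$, the substitute-and-divide computation showing this $G$ satisfies the displayed functional equation (your coefficients $h_2$, $g_0$, $g_1$ check out), and the standard Hankel identity $H_n\!\left(\frac{a}{1-c_1x-x^2G}\right)=a^nH_{n-1}(G)$, i.e.\ the $d=0$ case of Proposition 4.1. So the proposal is correct and takes essentially the same route as the cited source.
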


 To be precise, define $Q_0(x) = Q(x)=\frac{a_0+b_0x}{1+c_0x+d_0x^2+x^2(e_0+f_0x)Q_0(x)}$, and recursively define $Q_{n+1}(x)$ to be the unique power series solution of
\[Q_{n+1}(x)=\frac{a_{n+1}+b_{n+1}x}{1+c_{n+1}x+d_{n+1}x^2+x^2(e_{n+1}+f_{n+1}x)Q_{n+1}(x)}.\]
where
\begin{align}
   & a_{n+1}=-\frac{a_n^3e_n+a_n^2d_n-a_nb_nc_n+b_n^2}{a_n^2}, \label{eq a}\\
   & b_{n+1}=-\frac{a_n^4f_n+c_na_n^3d_n-a_n^2c_n^2b_n+2a_nc_nb_n^2-a_n^2b_nd_n-b_n^3}{a_n^3},\label{eq b}\\
   & c_{n+1}=c_n,\label{eq c}\\
   & d_{n+1}=-\frac{a_n^2d_n-2a_nb_nc_n+2b_n^2}{a_n^2},\label{eq d}\\
   & e_{n+1}=-1,\label{eq e}\\
   & f_{n+1}=-\frac{b_n}{a_n}\label{eq f}.
\end{align}

Iteration of Lemma \ref{lem-Xin} clearly gives
\begin{equation}\label{hnq0a}
 H_n(Q_0)=a_0^na_1^{n-1} \cdots a_{n-1}.
\end{equation}

The recursion system was solved explicitly for solving the Somos 4 Conjecture.
\begin{thm}\label{thm333}{\rm\cite{Xin}}
Suppose $c_n=c,\ e_n=-1$, and $a_n$, $b_n$, $d_n$, $f_n$ satisfy the recursion \eqref{eq a}, \eqref{eq b}, \eqref{eq d}, \eqref{eq f}. Then
\begin{equation}\label{eqan}
  a_{n+2}a_{n+1}+a_{n+1}a_n=2a_0a_1+a_0(2f_1+c)(f_0+c+f_1)-\frac{a_0^2(f_0+c+f_1)^2}{a_{n+1}}.
\end{equation}
\end{thm}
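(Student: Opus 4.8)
The plan is to eliminate $b_n$ and $d_n$ from the recursion, reduce everything to two scalar identities among the $a_n$ and $f_n$ alone, exhibit a conserved quantity, and then match constants at $n=0$. Since $e_n\equiv-1$, $c_n\equiv c$, and $f_{n+1}=-b_n/a_n$ by \eqref{eq f}, I would substitute $b_n=-a_nf_{n+1}$ everywhere. Dividing \eqref{eq a} and \eqref{eq d} by $a_n^2$ then gives
\begin{align*}
a_{n+1}&=a_n-d_n-cf_{n+1}-f_{n+1}^2,\\
d_{n+1}&=-d_n-2cf_{n+1}-2f_{n+1}^2.
\end{align*}
From the first line, $d_n=a_n-a_{n+1}-cf_{n+1}-f_{n+1}^2$; feeding this into the second (at indices $n$ and $n+1$) and cancelling $d_{n+1}$ yields the first key relation
\begin{equation*}
a_{n+2}-a_n=(f_{n+1}-f_{n+2})(c+f_{n+1}+f_{n+2}).\tag{A}
\end{equation*}
Next, dividing \eqref{eq b} by $a_n^3$ and again using $b_n/a_n=-f_{n+1}$, its right-hand side collects into $a_nf_n+(c+f_{n+1})\bigl(d_n+cf_{n+1}+f_{n+1}^2\bigr)$; replacing $d_n+cf_{n+1}+f_{n+1}^2$ by $a_n-a_{n+1}$ via the formula for $d_n$ gives $a_{n+1}f_{n+2}=a_nf_n+(c+f_{n+1})(a_n-a_{n+1})$, which rearranges to the second key relation
\begin{equation*}
a_{n+1}(c+f_{n+1}+f_{n+2})=a_n(c+f_n+f_{n+1}).\tag{B}
\end{equation*}

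Writing $g_n:=c+f_n+f_{n+1}$, relation (B) says $a_ng_n$ is independent of $n$, so $a_ng_n=a_0g_0=a_0(c+f_0+f_1)=:L$ for every $n$. In particular $L^2=a_{n+1}^2g_{n+1}^2$, hence $a_0^2(f_0+c+f_1)^2/a_{n+1}=a_{n+1}g_{n+1}^2$, which is exactly the origin of the $1/a_{n+1}$ term in \eqref{eqan}. I would then prove that
\begin{equation*}
E_n:=a_{n+1}\bigl(a_{n+2}+a_n+g_{n+1}^2\bigr)
\end{equation*}
is constant in $n$: cancelling the $a_{n+1}a_{n+2}$ terms in $E_{n+1}-E_n$, using $a_mg_m^2=Lg_m$ for the $g$-terms, splitting $a_{n+2}a_{n+3}-a_{n+1}a_n=a_{n+2}(a_{n+3}-a_{n+1})+a_{n+1}(a_{n+2}-a_n)$, and applying (A) together with $a_mg_m=L$, one gets $E_{n+1}-E_n=L(f_{n+3}-f_{n+1})+L(f_{n+2}-f_{n+3})+L(f_{n+1}-f_{n+2})=0$. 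Evaluating at $n=0$ with $a_2=a_0+(f_1-f_2)g_1$ from (A) and $a_1g_1=L$ yields $E_0=2a_0a_1+L(2f_1+c)=2a_0a_1+a_0(2f_1+c)(f_0+c+f_1)$. Substituting $a_{n+1}g_{n+1}^2=a_0^2(f_0+c+f_1)^2/a_{n+1}$ into the identity $E_n=E_0$ and expanding then reproduces \eqref{eqan} exactly.

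The step I expect to be the main obstacle is the algebraic simplification of \eqref{eq b}, by far the bulkiest of the recursions: the computation only becomes manageable once one observes that, after the substitution $b_n=-a_nf_{n+1}$, the numerator factors through $(c+f_{n+1})$ and that the leftover bracket is precisely $d_n+cf_{n+1}+f_{n+1}^2=a_n-a_{n+1}$. Everything downstream of (A) and (B) is bookkeeping. One should also record the standing assumption $a_{n+1}\neq0$ for all $n$ --- automatic in the setting of Lemma~\ref{lem-Xin}, where iterating the quadratic transformation requires nonzero leading coefficients --- since \eqref{eqan} divides by $a_{n+1}$.
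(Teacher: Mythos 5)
Your argument is correct; I checked the elimination steps and the constants. Dividing \eqref{eq a} and \eqref{eq d} by $a_n^2$ after substituting $b_n=-a_nf_{n+1}$ does give $a_{n+1}=a_n-d_n-cf_{n+1}-f_{n+1}^2$ and $d_{n+1}=-d_n-2cf_{n+1}-2f_{n+1}^2$, from which (A) follows; \eqref{eq b} indeed collapses to $a_{n+1}f_{n+2}=a_nf_n+(c+f_{n+1})(a_n-a_{n+1})$, which is (B), i.e.\ the invariance of $a_n(c+f_n+f_{n+1})=L$; and your conserved quantity $E_n=a_{n+1}(a_{n+2}+a_n+g_{n+1}^2)$ telescopes exactly as you say, with $E_0=2a_0a_1+L(2f_1+c)$ and $a_{n+1}g_{n+1}^2=L^2/a_{n+1}$ reproducing \eqref{eqan}. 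Note, however, that there is no in-paper proof to compare against: the statement is quoted verbatim from \cite{Xin}, and the present paper uses it as a black box. So what you have written is a self-contained derivation of the cited result; its two pillars (the first-integral $a_n(c+f_n+f_{n+1})$ and a second conserved combination of consecutive $a_n$'s) are the same structural facts that underlie Xin's original solution of the recursion system, so your route is a reconstruction of that argument rather than a departure from anything in this paper. Your closing remark about the standing hypothesis $a_n\neq 0$ is the right thing to record, since both (B) and \eqref{eqan} divide by $a_{n+1}$, and in the intended application this is guaranteed by the iteration of Lemma \ref{lem-Xin} (and is visible in \eqref{hnq0a}, where vanishing of some $a_n$ would kill all subsequent Hankel determinants).
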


\section{the proof of the Theorem 1}

Now we are ready to prove  Theorem 1.
\begin{proof}[Proof of the Theorem 1]
Theorem 1 is equivalent to the following equation,
\begin{equation}\label{hq0}
  H_n(Q_0)=\frac{\alpha H_{n-1}(Q_0) H_{n-3}(Q_0)+\beta H_{n-2}^2(Q_0)}{H_{n-4}(Q_0)},\ \  n \geq 4.
\end{equation}
By \eqref{hnq0a}, the recursion for $H_n(Q_0)$ is transformed  to that for $a_n$ as follow
\begin{equation}\label{e-rec-a}
  a_{{n}} a_{n-1}a_{n-2}-( {\alpha}+\frac{\beta}{ a_{n-1}})=0.
\end{equation}

Equation (\ref{eqan}), with $n$ replaced by $n-2$, can be rewritten as
\begin{equation}\label{eq_an}
 a_{{n}}=\frac{2\,a_{{0}}a_{{1}}+a_{{0}} \left( f
_{{0}}+f_{{1}}+c \right)  \left( 2\,f_{{1}}+c \right) }{a_{n-1}}-{\frac {{a_{{0}
}}^{2} \left( f_{{0}}+f_{{1}}+c \right) ^{2}}{a_{{n-1}}^2}}- a_{{n-2}}.
\end{equation}
Put the above into the left hand side of \eqref{e-rec-a}, we obtain
 \begin{align*}
  a_{{n}} a_{n-1}a_{n-2}- \left({\alpha}+\frac{\beta}{ a_{n-1}}\right)=& \frac1{a_{n-1}}\Big(-\left( f_{{0}}+f_{{1}}+c \right) ^{2}a_{{n-2}}{a_{{0}}}^{2}+ \left(
{c}^{2}+cf_{{0}}+3\,cf_{{1}}+2\,f_{{0}}f_{{1}}+2\,{f_{{1}}}^{2}+2\,a_{
{1}} \right) \\
& a_{{n-1}}a_{{n-2}}a_{{0}}-{a_{{n-2}}}^{2}{a_{{n-1}}}^{2}-\alpha\,a_{{n-1}}-\beta \Big).
 \end{align*}
Denote by $T(n-1)$ the numerator of right-hand side of the above equation.
 Replace $n$ with $n+1$, put in $\alpha=a_0^2  (c+f_0+f_1)^2$ (but keep $\beta$ at this moment), and simplify to obtain
\begin{align*}
 T(n)=&- \left( c+f_{{0}}+f_{{1}} \right)^{2}{a_{{0}}}^{2}a_{{n-1}}+ \left(
{c}^{2}+cf_{{0}}+3\,cf_{{1}}+2\,f_{{0}}f_{{1}}+2\,{f_{{1}}}^{2}+2\,a_{
{1}} \right) a_{{n}}a_{{n-1}}a_{{0}}-{a_{{n-1}}}^{2}{a_{{n}}}^{2}\\
&-\left( c+f_0+f_{{1}} \right) ^{2}{a_{{0}}}^{2}a_{{n}}-\beta.
\end{align*}
We claim that $T(n)=0$ for all $n$, so that Theorem 1 holds.

We prove the claim by induction on $n$. The claim is easily checked to be true for $n=1$ (with $\beta=- \left( c+f_{{0}}+f_{{1}} \right) ^{2}{a_{{0}}}^{3}-a_{{1}} \left(  \left( f_{{0}}-f_{{1}} \right)  \left( c+f_{{0}}+f_{{1}} \right)-a_{{1}}
 \right){a_{{0}}}^{2}$). Assume
the claim holds for $n-1$, i.e., $T(n-1)=0$.

Substitute \eqref{eq_an} into $T(n)$ and simplify. We obtain a fraction whose numerator is
\begin{align*}
   -&a_{n-1}^2  \big({a_{{0}}}^{2}\left( c+f_{{0}}+ {f_{{1}}} \right)^2 \left(  a_{{n-2}}+ a_{{n-1}} \right) - \left(
{c}^{2}+cf_{{0}}+3\,cf_{{1}}+2\,f_{{0}}f_{{1}}+2\,{f_{{1}}}^{2}+2\,a_{
{1}} \right) a_{{n-2}}a_{{n-1}}a_{{0}}\\
+&{a_{{n-2}}}^{2}{a_{{n-1}}}^{2
}+\beta \big)\\
=&a_{n-1}^2T(n-1)\\
=&0.
\end{align*}
Thus the claim follows. 
Furthermore, $a_1$ and $f_1$ can be directly obtained from the \eqref{eq a}, \eqref{eq c} ,\eqref{eq e} and  \eqref{eq f} . 
\end{proof}

\section{Applied to Barry's four conjectures}

Barry \cite{Barrysomos} listed eight conjectures concerning generalized J-fractions and Somos sequences. Among them, four conjectures are $(\alpha,\beta)$ Somos $4$ sequences. We prove them as corollaries of Theorem 1.

\begin{cor}\cite[Conjecture 2]{Barrysomos} We consider the generalized Jacobi continued fraction
$$g(x)=\frac{1}{1-\frac{1+rx}{1-x} x - sx^2 g(x)}.$$
$\{H_{n}(g)\}_{n\geq 1}$ is  a $(0, s^2(r+s+1)^2)$ Somos $4$ sequence.
\end{cor}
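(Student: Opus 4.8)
The plan is to recognize Corollary 3 (Barry's Conjecture 2) as the special case of Theorem 1 obtained by matching the continued fraction $g(x)$ to the canonical form
$$Q_0(x)=\frac{a_0+b_0x}{1+cx+d_0x^2+x^2(-1+f_0x)Q_0(x)}.$$
First I would rewrite $g(x)$ so that the numerator and denominator are explicit polynomials in $x$ plus the $x^2(\cdots)g(x)$ term. Starting from
$$g(x)=\frac{1}{1-\tfrac{1+rx}{1-x}x-sx^2g(x)},$$
I multiply through by $(1-x)$ to clear the inner fraction, obtaining
$$g(x)=\frac{1-x}{1-x-(1+rx)x-s(1-x)x^2 g(x)}=\frac{1-x}{1-2x-rx^2-s x^2(1-x)g(x)}.$$
This already has the shape of $Q_0$: reading off coefficients gives $a_0=1$, $b_0=-1$, $c=-2$, $d_0=-r$, and the factor $x^2(e_0+f_0x)g(x)$ must equal $-sx^2(1-x)g(x)$, i.e. $e_0=-s$ and $f_0=s$. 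There is a mismatch: Theorem 1 requires $e_0=-1$, whereas here $e_0=-s$. The remedy is the standard rescaling $g(x)\mapsto \lambda g(x)$ together with adjusting $s$; more cleanly, I would instead absorb the constant by writing $h(x) = s\, g(x)$ or rescale $x$, so that after a change of normalization the coefficient of the $x^2 Q_0$ term becomes exactly $-1$. Concretely, set $Q_0(x) = s\,g(x)$; then dividing the functional equation for $g$ appropriately yields a $Q_0$ with $e_0=-1$ while $a_0$, $b_0$, $d_0$, $f_0$ pick up explicit factors of $s$. Since $H_n(s\,g) = s^n H_n(g)$, and since the Somos 4 recursion is homogeneous of degree zero in the appropriate sense, the $(\alpha,\beta)$ parameters scale predictably, so establishing the claim for $Q_0$ gives it for $g$.

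Once the normalization is fixed I would compute the auxiliary quantities $a_1 = a_0 - d_0 + \tfrac{b_0}{a_0}(c - \tfrac{b_0}{a_0})$ and $f_1 = -\tfrac{b_0}{a_0}$ from the matched coefficients, then substitute everything into the formulas
$$\alpha = a_0^2(c+f_0+f_1)^2, \qquad \beta = -(c+f_0+f_1)^2 a_0^3 - a_1\big((f_0-f_1)(c+f_0+f_1)-a_1\big)a_0^2$$
of Theorem 1. The target is to check that after accounting for the $s^n$ scaling of the Hankel determinants, $\alpha$ collapses to $0$ and $\beta$ becomes $s^2(r+s+1)^2$. The vanishing of $\alpha$ is the key structural fact: it forces $c+f_0+f_1=0$, i.e. $c = -(f_0+f_1)$, which with $f_1 = -b_0/a_0$ is a linear relation among the matched coefficients that I expect to hold identically after the rescaling. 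With $\alpha=0$ the formula for $\beta$ simplifies dramatically to $\beta = a_0^2 a_1^2$, so the whole computation reduces to verifying $a_0^2 a_1^2$ equals $(r+s+1)^2$ up to the scaling factor, i.e. $a_0 a_1 = \pm(r+s+1)$ after normalization.

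The main obstacle — really the only nontrivial point — is getting the normalization bookkeeping exactly right: tracking how the substitution that sends $e_0 \mapsto -1$ redistributes powers of $s$ among $a_0, b_0, d_0, f_0$, and then how $H_n(Q_0) = s^{\pm n} H_n(g)$ feeds back through the identity $H_n = \tfrac{\alpha H_{n-1}H_{n-3} + \beta H_{n-2}^2}{H_{n-4}}$ to rescale $(\alpha,\beta)$. Everything else is a finite symbolic computation: match coefficients, confirm $c+f_0+f_1=0$, read off $\beta = a_0^2 a_1^2$, and simplify to $s^2(r+s+1)^2$. I would also double-check the index convention — the corollary indexes the sequence from $n\geq 1$ whereas Theorem 1 starts from $n\geq 0$ — but since an initial shift of a Somos 4 sequence is again a Somos 4 sequence with the same parameters, this is harmless. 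Thus the proof is essentially a one-line invocation of Theorem 1 once the dictionary between $g(x)$ and $Q_0(x)$ is written down.
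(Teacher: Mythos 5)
Your plan is correct, but it takes a different route to the key normalization step than the paper does. The paper first clears the inner fraction exactly as you do, obtaining $g(x)=\frac{1-x}{1-2x-rx^2-sx^2(1-x)g(x)}$, but then it does \emph{not} need your rescaling trick: Lemma \ref{lem-Xin} is stated for general $e$ (only Theorem 1 insists on $e_0=-1$), so the paper simply applies the quadratic transformation $\tau$ once, with $a=1,\ b=-1,\ c=-2,\ d=-r,\ e=-s,\ f=s$, to get $g_0=\tau(g)=\frac{(s+r+1)(1-x)}{1-2x+(r+2)x^2+x^2(-1+x)g_0}$, which automatically has $e=-1$; Theorem 1 applied to $g_0$ (with $a_0=s+r+1$, $c=-2$, $f_0=f_1=1$, $a_1=s$) gives $\alpha=0$, $\beta=a_0^2a_1^2=s^2(r+s+1)^2$, and the Hankel shift $H_n(g)=1^n\,H_{n-1}(g_0)$ finishes it. Your alternative — setting $Q_0=s\,g$ so that the equation becomes $Q_0=\frac{s-sx}{1-2x-rx^2+x^2(-1+x)Q_0}$ — is a legitimate and arguably more elementary way to reach the hypothesis of Theorem 1 directly: one checks $a_0=s$, $b_0=-s$, $d_0=-r$, $f_0=1$, hence $f_1=1$, $a_1=s+r+1$, giving $\alpha=s^2(-2+1+1)^2=0$ and $\beta=a_0^2a_1^2=s^2(r+s+1)^2$ with no posthoc adjustment. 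Two small points you should tighten: after the rescaling, $d_0$ and $f_0$ do not ``pick up factors of $s$'' (only $a_0,b_0$ do), and the $(\alpha,\beta)$ parameters are not merely ``predictably scaled'' but literally unchanged under $H_n\mapsto s^nH_n$, since both sides of $H_nH_{n-4}=\alpha H_{n-1}H_{n-3}+\beta H_{n-2}^2$ are homogeneous of degree $2n-4$ in that grading — stating this explicitly closes the only loose end in your bookkeeping. What the paper's route buys is uniformity (the same one-step $\tau$ reduction is used in all four corollaries, where such a simple scalar rescaling is not always available); what your route buys here is that you avoid computing the $\tau$-image entirely.
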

\begin{proof}
  Applying Lemma \ref{lem-Xin} two times gives
 $$g_0(x)=\tau (g(x))={\frac { \left( s+r+1 \right) \left( 1-x \right)   }{ 1-2\,x+(r+2){x}^{2}+{x}^{2}\left(
-1+x \right)g_0(x)}},
$$

By Theorem 1, we can verify that $a_0=s+r+1, c=-2, f_0= 1, f_1= 1, a_1=s.$
 Thus $$ \alpha=a_0^2  (c+f_0+f_1)^2=0.$$ and $$\beta=- \left( c+f_{{0}}+f_{{1}} \right) ^{2}{a_{{0}}}^{3}-a_{{1}} \left(  \left( f_{{0}}-f_{{1}} \right)  \left( c+f_{{0}}+f_{{1}} \right)-a_{{1}}
 \right){a_{{0}}}^{2}=s^2(r+s+1)^2.$$
 Thus $\{H_{n}(g_0)\}_{n\geq 0}$ is a $(0, s^2(r+s+1)^2)$ Somos $4$ sequence.

By Lemma  \ref{lem-Xin}, we have $H_n(g)=H_{n-1}(g_0)$.
   So the Conjecture holds true.
\end{proof}

%
%

\begin{cor}\cite[Conjecture 3]{Barrysomos}  We consider the generalized Jacobi continued fraction
$$g(x)=\frac{1}{1-\frac{1+rx}{1-x} x - \frac{sx^2}{1-x} g(x)}.$$
$\{H_{n}(g)\}_{n\geq 1}$ is a $(s^2, s^2(r+(r+s)^2))$ Somos $4$ sequence.
\end{cor}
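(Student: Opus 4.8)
The plan is to mimic exactly the proof of Corollary 4.1 (Conjecture 2): reduce the statement about $H_n(g)$ to a statement about $H_{n-2}(g_0)$ where $g_0 = \tau(\tau(g))$, then read off the parameters $a_0, c, f_0, f_1, a_1$ and plug into the formulas for $\alpha$ and $\beta$ in Theorem 1.

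First I would apply Lemma \ref{lem-Xin} once to $g(x)$. To do this I must write $g(x)$ in the standard form $\frac{a+bx}{1+cx+dx^2+x^2(e+fx)g(x)}$. Clearing the continued fraction, $g(x) = \frac{1}{1 - \frac{(1+rx)x}{1-x} - \frac{sx^2}{1-x}g(x)}$, so multiplying numerator and denominator by $1-x$ gives $g(x) = \frac{1-x}{1-x-(1+rx)x - sx^2 g(x)} = \frac{1-x}{1-2x-rx^2-sx^2g(x)}$. Hence for the first application $a = 1,\ b = -1,\ c = -2,\ d = -r,\ e = -s,\ f = 0$. Applying Lemma \ref{lem-Xin} (or equivalently the recursion \eqref{eq a}--\eqref{eq f}) produces $\tau(g)$ in standard form with new coefficients; then I apply the lemma a second time to obtain $g_0 = \tau(\tau(g))$, again in standard form. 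By the iteration identity each application contributes a factor $a^n$ (here $a_0' = $ the $a$-coefficient of $g$, which is $1$, and $a_1' = $ the $a$-coefficient of $\tau(g)$), so that $H_n(g) = H_{n-2}(g_0)$ up to an explicit power of $1$; in fact, paralleling Corollary 4.1, I expect the powers to collapse and give simply $H_n(g) = H_{n-2}(g_0)$ (after checking $a$-values are $1$ at the relevant stages) — or more conservatively $H_n(g) = (\text{const})^n H_{n-2}(g_0)$, and a constant power does not affect the Somos 4 property other than rescaling $\alpha,\beta$, which I would track.

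Next, having $g_0(x) = \frac{a_0 + b_0 x}{1 + cx + d_0 x^2 + x^2(-1 + f_0 x)g_0(x)}$ in the exact shape required by Theorem 1 (note $e_0 = -1$ must hold, which it does after one application of the recursion since \eqref{eq e} forces $e_1 = -1$, and the second application keeps it $-1$), I read off $a_0, c, f_0$. Then I compute $a_1 = a_0 - d_0 + \frac{b_0}{a_0}(c - \frac{b_0}{a_0})$ and $f_1 = -\frac{b_0}{a_0}$ from the displayed formulas in Theorem 1. Plugging into
\[
\alpha = a_0^2(c + f_0 + f_1)^2, \qquad
\beta = -(c+f_0+f_1)^2 a_0^3 - a_1\big((f_0 - f_1)(c+f_0+f_1) - a_1\big)a_0^2,
\]
I should obtain $\alpha = s^2$ and $\beta = s^2(r + (r+s)^2)$. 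Finally, by Lemma \ref{lem-Xin} applied twice, $H_n(g) = H_{n-2}(g_0)$ (up to the harmless constant noted above), so $\{H_n(g)\}_{n \ge 1}$ is an $(s^2, s^2(r+(r+s)^2))$ Somos 4 sequence, as claimed.

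The main obstacle is purely computational: correctly carrying out two rounds of the rational substitution \eqref{eq a}--\eqref{eq f} starting from $(a,b,c,d,e,f) = (1,-1,-2,-r,-s,0)$ without algebra slips, and then verifying that the resulting $\alpha$ and $\beta$ simplify to the stated values. There is also a small bookkeeping point — making sure the standard form of $g_0$ genuinely has $e$-coefficient equal to $-1$ and leading numerator coefficient matching what Theorem 1 calls $a_0$ — but this is guaranteed structurally by \eqref{eq e} and \eqref{eq f} after the first application. I do not anticipate any conceptual difficulty; the work is to confirm the two-step reduction and the parameter identification, exactly as in the proof of Corollary 4.1.
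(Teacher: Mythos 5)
Your overall strategy (rewrite $g$ in the standard form, apply Lemma \ref{lem-Xin}, read off $a_0,c,f_0,f_1,a_1$, and plug into Theorem 1) is the paper's method, and your initial normalization $g(x)=\frac{1-x}{1-2x-rx^2-sx^2g(x)}$, i.e.\ $(a,b,c,d,e,f)=(1,-1,-2,-r,-s,0)$, is correct. The concrete misstep is in the reduction: you plan \emph{two} applications of $\tau$ and expect the powers of $a$ to collapse so that $H_n(g)=H_{n-2}(g_0)$. One application already suffices: since \eqref{eq e} and \eqref{eq f} force $e=-1$ after a single step, $g_0=\tau(g)=\frac{1+s+r-(1+r)x}{1-2x+(r+2)x^2-x^2(1-x)g_0(x)}$ is already in the exact shape required by Theorem 1, and because the initial $a$-coefficient is $1$ the lemma gives the clean identity $H_n(g)=H_{n-1}(g_0)$. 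This is what the paper does, reading off $a_0=s+r+1$, $c=-2$, $f_0=1$, $f_1=\frac{1+r}{1+s+r}$, $a_1=\frac{s(1+s+2r+s^2+2rs+r^2)}{(1+s+r)^2}$, whence $\alpha=s^2$ and $\beta=s^2(r+(r+s)^2)$.

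If you insist on a second application, note that the $a$-coefficient of $\tau(g)$ is $s+r+1$, not $1$, so the powers do \emph{not} collapse: you get $H_n(g)=(s+r+1)^{\,n-1}H_{n-2}(\tau^2(g))$. Your conservative fallback can be repaired — a factor of the form $\mu\lambda^n$ leaves the $(\alpha,\beta)$ Somos 4 recurrence invariant with the \emph{same} parameters (no rescaling at all, since $h_nh_{n-4}$, $h_{n-1}h_{n-3}$ and $h_{n-2}^2$ all pick up the same factor $\mu^2\lambda^{2n-4}$) — but then you must also verify the additional, non-obvious fact that applying Theorem 1 one level deeper in the recursion chain (i.e.\ with $a_1,f_1,a_2,f_2$ in place of $a_0,f_0,a_1,f_1$) returns the same pair $(\alpha,\beta)$. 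That is true here, but it is an extra claim your proposal neither states nor proves, and it is exactly the kind of bookkeeping the paper's one-application route avoids. So: same method in spirit, but replace the two-step reduction by the single application of Lemma \ref{lem-Xin} (or, if you keep two steps, add the invariance arguments above); otherwise the asserted identity $H_n(g)=H_{n-2}(g_0)$ is simply false.
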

\begin{proof}
Applying Lemma \ref{lem-Xin} gives
 $$g_0(x)=\tau (g(x))=\frac{1+s+r-(1+r)x} {1-2\,x+2\,{x}^{2}+r{x}^{2}-{x}^{2}(1-{x})g_0(x)},
$$

Applying Theorem 1,  we verified that $a_0=s+r+1, c=-2, f_0= 1, f_1= \frac{1+r} {1+s+r}, \\
a_1=\frac{s \left({1+s+2\,r+{s}^{2}+2\,rs+{r}^{2}}\right)} {\left({1+s+r}\right)^{2}}.$ Thus $$ \alpha=a_0^2  (c+f_0+f_1)^2=s^2,$$ and $$\beta=- \left( c+f_{{0}}+f_{{1}} \right) ^{2}{a_{{0}}}^{3}-a_{{1}} \left(  \left( f_{{0}}-f_{{1}} \right)  \left( c+f_{{0}}+f_{{1}} \right)-a_{{1}}
 \right){a_{{0}}}^{2}=s ^{2}\left({r+{s}^{2}+2\,rs+{r}^{2}}\right).$$
Thus, $\{H_{n}(g_0)\}_{n\geq 1}$ is a $(\alpha, \beta)$ Somos $4$ sequence.

By Lemma \ref{lem-Xin}, we have $H_n(g)=H_{n-1}(g_0)$.
   So the Conjecture holds true.
\end{proof}

%
%

\begin{cor}\cite[Conjecture 4]{Barrysomos} We consider the generalized Jacobi continued fraction
$$g(x)=\frac{1}{1-\frac{1+rx}{1-x} x - \frac{1+sx}{1-x} x^2g(x)}.$$
$\{H_{n}(g)\}_{n\geq 1}$ is a $((s+1)^2,(1+r^2-6s-3s^2-r(s^2+2s-3)))$ Somos $4$ sequence.
\end{cor}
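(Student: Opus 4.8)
The plan is to mirror exactly the strategy used for Conjectures 2 and 3: reduce the continued fraction for $g(x)$ to the standard form of Theorem 1 via one application of Lemma \ref{lem-Xin}, read off the parameters $a_0,b_0,c,d_0,e_0,f_0$, compute $a_1$ and $f_1$ from the stated formulas, and then substitute into the closed forms for $\alpha$ and $\beta$ in Theorem 1. First I would rewrite $g(x)=\frac{1}{1-\frac{1+rx}{1-x}x-\frac{1+sx}{1-x}x^2 g(x)}$ over the common denominator $1-x$, obtaining $g(x)=\frac{1-x}{1-x-(1+rx)x-(1+sx)x^2 g(x)}=\frac{1-x}{1-2x-rx^2-x^2(1+sx)g(x)}$. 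This already has the shape $F(x)=\frac{a+bx}{1+cx+dx^2+x^2(e+fx)F(x)}$ with $a=1$, $b=-1$, $c=-2$, $d=-r$, $e=-1$, $f=-s$.

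Next I would apply the quadratic transformation: set $g_0(x)=\tau(g(x))$ and use the explicit formula in Lemma \ref{lem-Xin}. Plugging $a=1,b=-1,c=-2,d=-r,e=-1,f=-s$ into the formula for $G(x)$ gives the numerator coefficients $-\frac{a^3e+a^2d-acb+b^2}{a^2}=-(-1-r-2+1)=r+2$ and $-\frac{a^4f+ca^3d-c^2a^2b+2cab^2-ba^2d-b^3}{a^3}=-(-s+2r-4(-1)+2(-2)(1)(1)-(-1)(1)(-r)-(-1))=-(-s+2r+4-4-r+1)=-(r-s+1)=s-r-1$, and the quadratic coefficient $-\frac{-2acb+2b^2+a^2d}{a^2}=-(4+2-r)=r-6$; the resulting $g_0(x)$ is of the form $\frac{(r+2)+(s-r-1)x}{1+(-2)x+(r-6)x^2+x^2(-1+x)g_0(x)}$ (I would double-check these signs carefully during write-up). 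Then Theorem 1 applies with $a_0=r+2$, $b_0=s-r-1$, $c=-2$, $d_0=r-6$, $f_0=1$, from which $f_1=-b_0/a_0=\frac{r+1-s}{r+2}$ and $a_1=a_0-d_0+\frac{b_0}{a_0}(c-\frac{b_0}{a_0})=(r+2)-(r-6)+\frac{s-r-1}{r+2}\bigl(-2-\frac{s-r-1}{r+2}\bigr)$, which I would simplify to a rational function of $r,s$.

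Finally I would substitute into $\alpha=a_0^2(c+f_0+f_1)^2$ and $\beta=-(c+f_0+f_1)^2a_0^3-a_1\bigl((f_0-f_1)(c+f_0+f_1)-a_1\bigr)a_0^2$, and check that these collapse to $(s+1)^2$ and $1+r^2-6s-3s^2-r(s^2+2s-3)$ respectively; since $c+f_0+f_1=-2+1+\frac{r+1-s}{r+2}=\frac{-(r+2)+(r+1-s)}{r+2}=\frac{-1-s}{r+2}$, we get $a_0(c+f_0+f_1)=-(s+1)$, so $\alpha=(s+1)^2$ immediately, which is a good consistency check. Then, by Lemma \ref{lem-Xin}, $H_n(g)=H_{n-1}(g_0)$ (note $a_0$ appears as a factor but the Somos-$4$ property is invariant under index shift and under the scaling inherited from $H_n(g)=a_0^{\,?}H_{n-1}(g_0)$ — here the exponent is handled exactly as in the proofs of Corollaries for Conjectures 2 and 3), so the shifted sequence $\{H_n(g)\}_{n\ge 1}$ is an $((s+1)^2,\,1+r^2-6s-3s^2-r(s^2+2s-3))$ Somos $4$ sequence. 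The only real obstacle is bookkeeping: getting every sign in the transformation formula right and carrying out the rational-function simplification of $a_1$ and of $\beta$ without error; there is no conceptual difficulty beyond what is already dispatched in the preceding two corollaries.
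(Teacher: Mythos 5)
Your route is exactly the paper's: clear the denominator $1-x$ to put $g$ in the standard form, apply Lemma \ref{lem-Xin} once, read off $a_0,b_0,c,d_0,f_0$, and feed them into Theorem 1; the identification $a=1,b=-1,c=-2,d=-r,e=-1,f=-s$ and the numerator of $\tau(g)$, namely $(r+2)+(s-r-1)x$, agree with the paper. However, there is a concrete computational error in the one step you flagged only as ``bookkeeping'': the quadratic coefficient of the transformed fraction. With $a=1$, $b=-1$, $c=-2$, $d=-r$ one has $-2acb=-2\cdot 1\cdot(-2)\cdot(-1)=-4$ (you took $+4$), so $-2acb+2b^2+a^2d=-4+2-r=-(r+2)$ and hence $d_0=r+2$, not $r-6$; this matches the paper's $g_0(x)=\frac{(r+2)+(s-r-1)x}{1-2x+(r+2)x^2+x^2(-1+x)g_0(x)}$. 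The error is consequential because $a_1=a_0-d_0+\frac{b_0}{a_0}\bigl(c-\frac{b_0}{a_0}\bigr)$: with the correct $d_0=r+2$ the term $a_0-d_0$ vanishes and $a_1=\frac{r^2-s^2+4r-2s+3}{(r+2)^2}$, from which $(f_0-f_1)(c+f_0+f_1)-a_1=-1$ and $\beta$ collapses to $1+r^2-6s-3s^2-r(s^2+2s-3)$ as claimed; with your $d_0=r-6$ the value of $a_1$ is off by $8$ and the $\beta$ verification fails outright. Since the whole proof is this verification, the sign must be fixed for the argument to go through ($\alpha=(s+1)^2$ is unaffected, as it does not involve $d_0$).

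One smaller point: your hedging about the scaling exponent in $H_n(g)=a^nH_{n-1}(g_0)$ is unnecessary here. After clearing denominators the numerator of $g$ has constant term $a=1$, so Lemma \ref{lem-Xin} gives exactly $H_n(g)=H_{n-1}(g_0)$, which is all that is needed to transfer the Somos $4$ property of $\{H_n(g_0)\}_{n\ge 0}$ (from Theorem 1) to $\{H_n(g)\}_{n\ge 1}$; no invariance-under-scaling argument is required.
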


\begin{proof}
Applying Lemma \ref{lem-Xin} gives
 $$g_0(x)=\tau (g(x))=\frac{ \left( -r+s-1 \right) x+2 +r}{ 1-2x+\left( r+2 \right) {x}^{2}+ \left( {x}^{3}-{x}^{2} \right) g_0(x)},
$$

 Applying Theorem 1,  we verified that
$a_0=r+2, c=-2, f_0= 1, f_1= \frac{r-s+1}{r+2}, \\
a_1=\frac{r^2-s^2+4r-2s+3}{(r+2)^2}.$ Thus $$ \alpha=a_0^2  (c+f_0+f_1)^2=(s+1)^2,$$ and $$\beta=- \left( c+f_{{0}}+f_{{1}} \right) ^{2}{a_{{0}}}^{3}-a_{{1}} \left(  \left( f_{{0}}-f_{{1}} \right)  \left( c+f_{{0}}+f_{{1}} \right)-a_{{1}}
 \right){a_{{0}}}^{2}=1+r^2-6s-3s^2-r(s^2+2s-3).$$ Therefore, $\{H_{n}(g_0)\}_{n\geq 1}$ is a $(\alpha, \beta)$ Somos $4$ sequence.

By Lemma \ref{lem-Xin}, we have $H_n(g)=H_{n-1}(g_0)$.
   So the Conjecture holds true.
\end{proof}

%
%

 \begin{cor}\cite[Conjecture 5]{Barrysomos} We consider the generalized Jacobi continued fraction
$$g(x)=\frac{1}{1-v\frac{1+rx}{1-x} x - w\frac{1+sx}{1-x} x^2g(x)}.$$
$\{H_{n}(g)\}_{n\geq 1}$ is an $(\alpha, \beta)$ Somos $4$ sequence with parameters
$$\alpha = (s+v)^2 w^2,$$ and
$$\beta=w^2(r^2v^2+w(w+v-v^2)+rv(v+2w)-s^2(v(r+1)+2w)-s((r+1)v^2+w+v(r+1+3w))).$$
\end{cor}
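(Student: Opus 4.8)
The plan is to proceed exactly as in the proofs of Conjectures 2--4, reducing the statement to a single application of Theorem 1 after one (or two) passes through Lemma~\ref{lem-Xin}. First I would compute $g_0(x) = \tau(g(x))$ explicitly. Writing $g(x)$ in the form required by Lemma~\ref{lem-Xin}, one has
\[
g(x) = \frac{1}{1 + c x + d x^2 + x^2(e + f x) g(x)}
\]
with $a = 1$, $b = 0$, and the coefficients $c, d, e, f$ read off from the clearing of the $\tfrac{1}{1-x}$ denominators: namely $c = -(v+1)$, $d = v - vr + w$ (up to the usual sign bookkeeping), $e = -w$, $f = -w + ws$ or similar. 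Substituting $a=1$, $b=0$ into the formulas of Lemma~\ref{lem-Xin} collapses most terms (e.g.\ the $G$-numerator becomes $-(e + d)\,-\,(f + cd)x$ times appropriate signs), so $g_0$ will have the shape $\tfrac{a_0 + b_0 x}{1 + c x + d_0 x^2 + x^2(-1 + f_0 x)g_0(x)}$ demanded by Theorem~1, with $a_0, b_0, c, d_0, f_0$ being explicit rational functions of $v, w, r, s$. Here one must check the hypothesis $a \neq 0$ of Lemma~\ref{lem-Xin}, i.e.\ that the leading coefficient does not vanish identically; this holds generically in the parameters.

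Next I would record $f_1 = -b_0/a_0$ and $a_1 = a_0 - d_0 + \tfrac{b_0}{a_0}(c - \tfrac{b_0}{a_0})$ from Theorem 1, then substitute the values of $a_0, b_0, c, d_0, f_0$ found above. Plugging these into
\[
\alpha = a_0^2 (c + f_0 + f_1)^2, \qquad
\beta = -(c+f_0+f_1)^2 a_0^3 - a_1\big((f_0 - f_1)(c + f_0 + f_1) - a_1\big) a_0^2,
\]
and simplifying (this is where a computer algebra system does the work), should yield $\alpha = (s+v)^2 w^2$ and the stated $\beta$. Theorem~1 then gives that $\{H_n(g_0)\}_{n \ge 0}$ is an $(\alpha, \beta)$ Somos 4 sequence, and Lemma~\ref{lem-Xin} (iterated the same number of times as $\tau$ was applied) gives $H_n(g) = H_{n-1}(g_0)$ or $H_n(g) = H_{n-2}(g_0)$, from which the conjecture follows.

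The main obstacle is essentially bookkeeping rather than conceptual: the other three corollaries each needed only a single or double $\tau$-step because the specializations $v = w = 1$ etc.\ made the intermediate $a_0$ into a clean linear polynomial in the parameters, whereas here $a_0 = v + w + rv$ (or its analogue) is already a genuine bivariate-in-$r$ expression and the formula for $\beta$ is correspondingly messier, so one must be careful with the $1/a_0$ and $1/a_0^2$ denominators that appear in $a_1$ and in the recursion. I would double-check the final $\alpha$ and $\beta$ against a numerical instance: pick concrete small values of $v, w, r, s$, compute the first several Hankel determinants $H_n(g)$ directly, and verify that they satisfy $s_n s_{n-4} = \alpha s_{n-1}s_{n-3} + \beta s_{n-2}^2$ with the claimed parameters. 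One should also note the degenerate cases (e.g.\ $s + v = 0$, which forces $\alpha = 0$, recovering a specialization of Corollary for Conjecture 2) do not cause division-by-zero issues in the derivation, since the offending quantities cancel before any specialization is made.
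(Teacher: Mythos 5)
Your proposal is essentially the paper's proof: one application of Lemma \ref{lem-Xin} puts $g$ into the form required by Theorem 1 (the paper obtains $g_0=\tau(g)$ with $a_0=rv+v+w$, $c=-v-1$, $f_0=1$), after which $\alpha$ and $\beta$ are computed from the formulas of Theorem 1 and the conclusion follows from $H_n(g)=H_{n-1}(g_0)$. Only a bookkeeping correction: clearing the $\tfrac{1}{1-x}$ denominators gives numerator $1-x$, so $b=-1$ (not $0$), with $c=-(1+v)$, $d=-vr$, $e=-w$, $f=-ws$; with these values the computation indeed yields the stated $\alpha=(s+v)^2w^2$ and $\beta$.
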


\begin{proof}
  Apply Lemma \ref{lem-Xin}, we have
 $$g_0(x)=\tau (g(x))=\frac{\left( -r{v}^{2}+sw-{v}^{2} \right) x+rv+v+w}{
 1+ \left( -v-1 \right) x+ \left( rv+2\,v \right) {x}^{2}+\left( {x}^{3}-{x}^{2} \right) g_0(x)}
,
$$

Applying Theorem 1,  we verified that
$a_0=r v+v+w, c=-v-1, f_0= 1, \\
f_1= {\frac {r{v}^{2}-sw+{v}^{2}}{rv+v+w}}, a_1={\frac { \left( w \left( {v}^{2}-v \right)  \left( r+1 \right) -{w}^{2
} \left( s+1 \right)  \right)  \left( s+v \right) }{ \left( rv+v+w\right) ^{2}}}+w.$ Thus $$ \alpha=a_0^2  (c+f_0+f_1)^2=(s+v)^2 w^2,$$ and
{\small
\begin{align*}
  \beta &=- \left( c+f_{{0}}+f_{{1}} \right) ^{2}{a_{{0}}}^{3}-a_{{1}} \left(  \left( f_{{0}}-f_{{1}} \right)  \left( c+f_{{0}}+f_{{1}} \right)-a_{{1}}
 \right){a_{{0}}}^{2} \\
 &=w^2(r^2v^2+w(w+v-v^2)+rv(v+2w)-s^2(v(r+1)+2w)-s((r+1)v^2+w+v(r+1+3w))).
\end{align*}
}
Therefore, $\{H_{n}(g_0)\}_{n\geq 1}$ is a $(\alpha, \beta)$ Somos $4$ sequence.

By lemma \ref{lem-Xin}, we have $H_n(g)=H_{n-1}(g_0)$.
   So the Conjecture holds true.
\end{proof}

\textbf{Acknowledgments}

 The authors would like to thank Guoce Xin who suggested to study the Hankel determinants of the Somos sequences, and who gave we many valuable discussions.
Part of this work appears in the M.S. thesis \cite{yangyangyou}.

\end{document}